\documentclass[11pt]{amsart}

\usepackage[T1]{fontenc}
\usepackage{amssymb}
\usepackage[colorlinks=true,linkcolor=blue,citecolor=blue,urlcolor=blue]{hyperref}

\newcommand{\C}{\mathbb C}
\newcommand{\A}{\mathbb A}
\newcommand{\Ga}{\mathbb G_a}
\newcommand{\Aut}{\operatorname{Aut}}
\newcommand{\SAut}{\operatorname{SAut}}
\newcommand{\Ker}{\operatorname{Ker}}
\newcommand{\LND}{\operatorname{LND}}
\newcommand{\ML}{\operatorname{ML}}
\newcommand{\Xcal}{{\mathcal X}}
\newcommand{\Ycal}{{\mathcal Y}}
\newcommand{\Ocal}{{\mathcal O}}
\newcommand{\Spec}{\operatorname{Spec}}
\newcommand{\pr}{\operatorname{pr}}

\newtheorem{theorem}{Theorem}[section]
\newtheorem{proposition}[theorem]{Proposition}
\newtheorem{corollary}[theorem]{Corollary}
\newtheorem{lemma}[theorem]{Lemma}
\theoremstyle{remark}

\theoremstyle{definition}
\newtheorem{construction}[theorem]{Construction}

\title{The cylinder over the Russell cubic threefold is flexible}
\author{Pierre-Marie Poloni}
\address{}
\email{poloni.pierremarie@gmail.com}
\date{\today}

\subjclass[2020]{14R20, 14R10}
\keywords{Russell cubic, locally nilpotent derivation, flexible variety}

\begin{document}

\begin{abstract}
The Russell cubic threefold is the hypersurface
\(\Xcal\subset\A^4_{\C}\) defined by
\(x^2y+z^2+x+t^3=0\). We prove that its cylinder
\(\Xcal\times\A^1\) is flexible. More precisely, after replacing its
standard embedding in \(\A^5\) by an equivalent one, we construct four
explicit algebraic \(\Ga\)-actions on the ambient affine space whose
restrictions generate a group acting transitively on the cylinder. 
\end{abstract}

\maketitle

\section{Introduction}

Let \(P=x^2y+z^2+x+t^3\in\C[x,y,z,t]\). The Russell cubic
threefold is the smooth affine hypersurface
\(\Xcal=\{P=0\}\subset\A^4\). We begin by recalling some of its
geometry.

Consider the projection \(\pr_x\colon\Xcal\to\A^1\),
\((x,y,z,t)\mapsto x\). Every fiber over a nonzero point is isomorphic
to \(\A^2\), whereas the fiber over the origin is the ``book
surface''
\(\Xcal\cap\{x=0\}\simeq\Gamma_{2,3}\times\A^1\), where
\(\Gamma_{2,3}\) is the cuspidal curve defined by \(z^2+t^3=0\).
The projection \((x,y,z,t)\mapsto(x,z,t)\) realizes \(\Xcal\) as an
affine modification of \(\A^3\): the plane \(\{x=0\}\) is replaced by
the exceptional divisor \(\Gamma_{2,3}\times\A^1\). It follows from
\cite[Theorem~3.1]{KZ99} that \(\Xcal\) is contractible. The
Dimca--Ramanujam theorem then implies that its underlying real manifold
is diffeomorphic to \(\mathbb R^6\).

Nevertheless, \(\Xcal\) is not algebraically isomorphic to \(\A^3\).
Indeed, its Makar-Limanov invariant is
\(\ML(\Xcal)=\C[x]\), which is nontrivial \cite{ML96}. In particular, every algebraic
\(\Ga\)-action on \(\Xcal\) preserves the regular function \(x\).
Thus, compared with affine three-space, the Russell
cubic does not admit sufficiently many algebraic additive group
actions.

The automorphism group of \(\Xcal\) was determined in
\cite{DMJP10}.  The resulting orbit decomposition, recorded explicitly in \cite[Proposition~1]{DMJP14}, is as follows.

\begin{theorem}[{\cite[Proposition~1]{DMJP14}}]
\label{thm:Russell-orbits}
The group \(\Aut(\Xcal)\) acts on \(\Xcal\) with exactly four orbits:
\begin{itemize}
\item the open orbit
\(\Ocal_1=\{x\ne0\}\simeq\A^1_*\times\A^2\);
\item the orbit
\(\Ocal_2=\{x=0,\ (z,t)\ne(0,0)\}
\simeq\A^1_*\times\A^1\);
\item the punctured line
\(\Ocal_3=\{(0,y,0,0):y\in\C^*\}\simeq\A^1_*\);
\item the fixed point \(\Ocal_4=\{(0,0,0,0)\}\).
\end{itemize}
\end{theorem}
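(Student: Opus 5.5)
Two things have to be shown: the four sets are \(\Aut(\Xcal)\)-invariant, and \(\Aut(\Xcal)\) acts transitively on each of them.

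\textbf{Invariance.} By \cite{ML96}, \(\ML(\Xcal)=\C[x]\), so every automorphism \(\phi\) preserves \(\C[x]\). Hence \(\phi^*x=ax+b\) with \(a\in\C^*\).

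The fiber over \(x=0\) is the only singular (non-reduced-looking, non-\(\A^2\)) fiber of \(\pr_x\). Therefore \(\phi\) maps \(\{x=0\}\) to itself, which forces \(b=0\), and \(\Ocal_1\) is invariant.

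On the book surface \(\{x=0\}\simeq\Gamma_{2,3}\times\A^1\), the restriction of \(\phi\) is an automorphism. It must preserve the singular locus \(\{z=t=0\}\), which is the line \(\{(0,y,0,0)\}\). So \(\Ocal_2\) is invariant, and so is the line \(\Ocal_3\cup\Ocal_4\).

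To separate \(\Ocal_4\) from \(\Ocal_3\), use the \(x\)-adic structure of \(\Xcal\) as an affine modification. Fibers of \(\pr_x\) near \(x=0\) satisfy \(y=-(z^2+x+t^3)/x^2\). The point \(y=0\) on the exceptional line is distinguished by the order of vanishing of \(z^2+t^3+x\) along the divisor. One could pass to the derived invariant \(\ML\) of the local structure. A cleaner route is to describe \(\Aut(\Xcal)\) explicitly, as in \cite{DMJP10}: it is generated by the \(\C^*\)-action \((x,y,z,t)\mapsto(\lambda^6x,\lambda^{-6}y,\lambda^3z,\lambda^2t)\) together with automorphisms that fix \(x\) and are induced by \(\LND\)s of the form \(x^k f(x)\cdot\partial\). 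Those exponential automorphisms act on the line \(\{x=z=t=0\}\) in a way that can be computed. Checking that the whole group fixes the origin is then the main obstacle. I would show it by proving that every \(\phi\) preserves the ideal \((x,z,t)\) and acts on \(y\) modulo it by \(y\mapsto \mu y\). The plan is to expand \(\phi\) modulo \(x^2\) using the relation \(x^2y=-(z^2+x+t^3)\) and the fact that \(x\mapsto ax\).

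\textbf{Transitivity.} On \(\Ocal_1\), for fixed \(x\neq0\) the fiber is \(\A^2\) with coordinates \((z,t)\), and \(y\) is determined by them. The derivations
\[
x^2\partial_z-2z\partial_y,\qquad x^2\partial_t-3t^2\partial_y
\]
are \(\LND\)s of \(\Xcal\). Their flows translate \(z\) and \(t\) by arbitrary multiples of \(x^2\neq0\), so they act transitively on each fiber. The \(\C^*\)-action then moves between fibers, since \(\lambda^6\) ranges over all of \(\C^*\).

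On \(\Ocal_2\), with \(x=0\), the flows above fix \(z,t\) and move \(y\) by \(-2z s\) or \(-3t^2s\). Since \((z,t)\neq(0,0)\), one of these is nonzero, so they give transitivity in the \(y\)-direction. The \(\C^*\)-action acts transitively on the punctured cusp \(\{z^2+t^3=0\}\setminus\{0\}\). Combining the two gives transitivity on \(\Ocal_2\).

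On \(\Ocal_3\), the \(\C^*\)-action sends \(y\mapsto\lambda^{-6}y\), which is transitive on \(\C^*\).

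Finally, \(\Ocal_4\) is a single point, so there is nothing to check.
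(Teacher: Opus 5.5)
\textbf{Verdict: genuine gap.} Your proof does not establish that $\Ocal_3$ and $\Ocal_4$ are different orbits, and the method you sketch for that step cannot work as stated.

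\textbf{What is correct.} The easy half of the statement is fine. You get invariance of $\Ocal_1$, of $\Ocal_2$ and of the line $\Ocal_3\cup\Ocal_4$ from $\ML(\Xcal)=\C[x]$ and singular loci. You get transitivity on each of the four sets from the $\C^*$-action together with $x^2\partial_z-2z\partial_y$ and $x^2\partial_t-3t^2\partial_y$. For comparison, the paper does not prove this theorem at all: it quotes \cite[Proposition~1]{DMJP14}, which rests on the determination of $\Aut(\Xcal)$ in \cite{DMJP10}.

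\textbf{The gap.} It is exactly the step you call the main obstacle: every automorphism must fix the origin, i.e.\ $\Ocal_3$ and $\Ocal_4$ are distinct orbits. Without it you only get \emph{at most} four orbits, and what you give there is a plan, not an argument. Your description of $\Aut(\Xcal)$ as generated by the $\C^*$-action and exponentials of derivations ``of the form $x^kf(x)\cdot\partial$'' is not justified. A complete description of $\Aut(\Xcal)$ is precisely the nontrivial content of the cited work.

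\textbf{Why the $x$-adic plan fails.} Expanding $\phi$ modulo $x^2$ cannot succeed. We have $\C[\Xcal]/(x^2)\simeq\C[y,z,t]/((z^2+t^3)^2)$ with $x\equiv-(z^2+t^3)$, so $y\mapsto y+\beta$ is an automorphism of this ring fixing $x$. The phenomenon persists to higher order. With $s=z^2+t^3$ one has $\C[\Xcal]/(x^3)\simeq\C[y,z,t]/(s^3)$ with $x=-s-s^2y$. The map $y\mapsto y+\beta$, $z\mapsto z(1-\beta s)^{1/2}$, $t\mapsto t(1-\beta s)^{1/3}$ (series truncated) induces an automorphism of this ring fixing $x$. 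The same holds on the formal neighbourhood of $\{x=0\}$, so no analysis near the special fibre can separate the origin from the rest of the line.

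\textbf{The missing global input.} The standard route has two steps.
\begin{enumerate}
\item Every automorphism preserves the Derksen subalgebra $\C[x,z,t]$ (Kaliman--Makar-Limanov). Hence $\phi$ is induced by an automorphism $\psi$ of $\A^3=\Spec\C[x,z,t]$ with $\psi^*x=\alpha x$, preserving the ideal $(x^2,f)$ where $f=x+z^2+t^3$.
\item Write $\psi^*f=hf+x^2k$. Then $\phi^*y=(hy-k)/\alpha^2$, so the origin is fixed if and only if $k(0,0,0)=0$. It is the constancy of the Jacobian of $\psi$ that forces this.
\end{enumerate}
For instance, suppose $\psi\equiv\mathrm{id}\bmod x$, and let $Z_1,T_1$ be the $x$-linear parts of $\psi^*z,\psi^*t$. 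Comparing $x$-coefficients gives $2zZ_1+3t^2T_1=h_1(z^2+t^3)$, and the Jacobian condition gives $\partial_zZ_1+\partial_tT_1=0$. From these, $Z_1(0,0)=0$ and $(\tfrac12+\tfrac13)h_1(0,0)=0$. Hence $k(0,0,0)=Z_1(0,0)^2-h_1(0,0)=0$.
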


We now turn to the cylinder
\(\Xcal\times\A^1=\Spec(\C[x,y,z,t,u]/(P))\). In contrast with the
Russell cubic itself, Dubouloz proved that
\(\ML(\Xcal\times\A^1)=\C\) \cite{Dub09}. The rigidity detected by
the Makar-Limanov invariant therefore disappears after passing to the
cylinder. However, this does not by itself determine how large the automorphism
group of the cylinder is, and in particular does not imply
transitivity.

Together with translations along the $\A^1$-factor,
Theorem~\ref{thm:Russell-orbits} immediately shows that
$\Aut(\Xcal\times\A^1)$ has at most four orbits. In fact, the results of \cite{DMJP10} already allow us to reduce
this upper bound to two. Since this sharper conclusion is not stated explicitly
there, we briefly give the argument. First, an explicit locally nilpotent derivation
\(D\in\LND(\C[\Xcal\times\A^1])\) satisfying \(D(x)\ne0\) is
constructed in \cite[Section~7]{DMJP10}. The corresponding \(\Ga\)-action can be
used to connect
\(\Ocal_1\times\A^1\) with \(\Ocal_2\times\A^1\). Second, one can connect
\(\Ocal_3\times\A^1\) with \(\Ocal_4\times\A^1\) by using another embedding of $\Xcal$, inequivalent to the standard embedding but stably equivalent to it. Let
\(Q=x^2y+(1+x)(z^2+x+t^3)\), and let
\(\Xcal'=\{Q=0\}\subset\A^4\). The morphism
\(\varphi\colon\Xcal\to\Xcal'\),
\((x,y,z,t)\mapsto(x,(1+x)y,z,t)\), is an isomorphism fixing the
origin. On the other hand, the proof of
\cite[Proposition~6.4]{DMJP10} provides an automorphism of \(\A^5\)
which maps \(\Xcal'\times\A^1\) onto \(\Xcal\times\A^1\) and sends
\((0,0,0,0,0)\) to \((0,1,0,0,0)\). Composing its restriction with
\(\varphi\times\operatorname{id}_{\A^1}\) gives an automorphism of
\(\Xcal\times\A^1\) which connects
\(\Ocal_3\times\A^1\) and \(\Ocal_4\times\A^1\).

Let \(L=\{x=z=t=0\}=\Ocal_3\sqcup\Ocal_4\subset\Xcal\) be the distinguished line inside the Russell cubic. The
preceding discussion shows that, in order
to prove transitivity on $\Xcal\times\A^1$,  it is enough to construct an
automorphism of the cylinder moving a point out of
\(L\times\A^1\). The required automorphism will arise from a new locally nilpotent
derivation \(E\).

Our starting point is a slight normalization of the locally nilpotent
derivation \(D\) introduced in \cite{DMJP10}. We then apply to it
a procedure referred to as Bhatwadekar's technique in
\cite[Section~7]{GP26}, localizing at a suitable element of
\(\Ker(D)\) that restricts to the constant function \(1\) on \(L\times\A^1\).

It turns out that the explicit formulas for both \(D\) and \(E\)
become considerably simpler if we replace the standard embedding of
the cylinder over the Russell cubic by the following equivalent one. Consider the automorphism \(
\Theta(x,y,z,t,u)=(x,y+u^2,z-xu,t,u)
\) of \(\A^5\), and put \(\widetilde P=x^2y+2xzu+z^2+x+t^3\). Then \(\Theta^*(\widetilde P)=P\). Thus \(\Theta\) restricts to an
isomorphism \(
\Theta\colon\Xcal\times\A^1\xrightarrow{\sim}\Ycal\), where \(\Ycal=V(\widetilde P)\subset\A^5\).

From this point onward, all explicit formulas are written in these
adapted coordinates. The new locally nilpotent derivation is described in the following
proposition.

\begin{proposition}
\label{prop:new-LND}

Set \(a=1+xy+2zu\), \(c=zy-2au\), and \(m=zy+2au\). Consider the
derivation \(E\) of \(\C[x,y,z,t,u]\) defined by
\[
\begin{aligned}
E(x)&=3t^2m, & E(y)&=0, & E(z)&=-3ayt^2,\\
E(t)&=ac,      & E(u)&=-\frac32y^2t^2.
\end{aligned}
\]
Then
\[
E(a)=E(c)=0
\qquad\text{and}\qquad
E(m)=-6ay^2t^2.
\]
Moreover, \(E\) is locally nilpotent and annihilates
\(E(\widetilde P)=0\). It therefore induces a locally nilpotent derivation
of \(\C[\Ycal]\).
\end{proposition}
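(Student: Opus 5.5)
The plan is to verify the identities by direct computation with the Leibniz rule, then to get local nilpotency from a triangularity argument based on the kernel elements \(a\) and \(c\), and finally to check \(E(\widetilde P)=0\) using a rewriting of \(\widetilde P\) in terms of \(a\).

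\emph{The identities.} Apply \(E\) to \(a=1+xy+2zu\):
\[
E(a)=yE(x)+xE(y)+2uE(z)+2zE(u)=3yt^2m-6ayut^2-3y^2zt^2 .
\]
Substituting \(m=zy+2au\) shows this vanishes. Next, since \(E(a)=0\),
\[
E(c)=yE(z)+zE(y)-2aE(u)=-3ay^2t^2+3ay^2t^2=0 .
\]
Finally,
\[
E(m)=yE(z)+2aE(u)=-6ay^2t^2 .
\]

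\emph{Local nilpotency.} Let \(N\subset\C[x,y,z,t,u]\) be the set of elements on which \(E\) is locally nilpotent. By the Leibniz rule, \(N\) is a subalgebra. Moreover, if \(E(f)\in N\) then \(f\in N\), because \(E^{k+1}(f)=E^k(E(f))\). It therefore suffices to place the five variables in \(N\) in a suitable order. First, \(y,a,c\in\Ker(E)\subset N\). Next, \(E(t)=ac\in N\), so \(t\in N\). Then \(E(u)=-\tfrac32y^2t^2\in N\), so \(u\in N\), and \(E(z)=-3ayt^2\in N\), so \(z\in N\). Consequently \(m=zy+2au\in N\), hence \(E(x)=3t^2m\in N\), so \(x\in N\). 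Thus \(N\) contains all the generators and \(E\) is locally nilpotent.

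\emph{The equation.} The key observation is that \(xa=x+x^2y+2xzu\), so \(\widetilde P=xa+z^2+t^3\). Using \(E(a)=0\),
\[
E(\widetilde P)=aE(x)+2zE(z)+3t^2E(t)=3at^2\,(m-2yz+c)=0,
\]
since \(m+c=2yz\). Hence the ideal \((\widetilde P)\) is \(E\)-stable, and \(E\) descends to a derivation of \(\C[\Ycal]\), which is again locally nilpotent.

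The only real obstacle is organizing the nilpotency argument without circularity. It works because \(a\) and \(c\) were shown to be kernel elements beforehand. Everything else is routine algebra.
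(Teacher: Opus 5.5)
Your proposal is correct and follows the paper's proof essentially step for step. The identities \(E(a)=E(c)=0\) and \(E(m)=-6ay^2t^2\) are obtained by the same Leibniz computations, and \(E(\widetilde P)=0\) comes from the same rewriting \(\widetilde P=ax+z^2+t^3\) and the relation \(m+c=2zy\). Local nilpotence rests on the same ordering via the kernel elements \(a,c,y\). The only difference is presentation: you argue through the subalgebra of \(E\)-nilpotent elements, whereas the paper records explicit bounds, namely \(E^4(z)=E^4(u)=E^4(m)=0\) and \(E^7(x)=0\).
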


To illustrate the geometric effect of \(E\), consider the point
\(p=(0,1,0,0,1)\) in \(\Ycal\). Its inverse image under \(\Theta\) is
\((0,0,0,0,1)\in\Ocal_4\times\A^1\). Moreover,
\(a(p)=1\) and \(c(p)=-2\), so \(E(t)(p)=-2\). Consequently, the \(\Ga\)-orbit of
\(p\) associated with \(E\) is not contained in the plane
\(\{x=z=t=0\}\subset\Ycal\), which is the image of
\(L\times\A^1\) under \(\Theta\).

Combined with the preceding discussion, this shows that the two
remaining possible orbits meet. Hence \(\Aut(\Ycal)\) acts transitively
on \(\Ycal\), and, via the isomorphism
\(\Theta\colon\Xcal\times\A^1\xrightarrow{\sim}\Ycal\), we get the following result.

\begin{corollary}\label{cor:cylinder-transitivity}
The automorphism group $\Aut(\Xcal\times\A^1)$ acts transitively on
$\Xcal\times\A^1$.
\end{corollary}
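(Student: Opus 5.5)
The plan is to assemble the orbit decomposition already obtained in the introduction. We work in the adapted model \(\Ycal\) via \(\Theta\). Since \(\Theta\colon\Xcal\times\A^1\to\Ycal\) is an isomorphism, it suffices to show that \(\Aut(\Ycal)\) acts transitively on \(\Ycal\); equivalently, that \(\Aut(\Xcal\times\A^1)\) has a single orbit. The starting point is Theorem~\ref{thm:Russell-orbits}. Automorphisms of \(\Xcal\) extend to \(\Xcal\times\A^1\) by acting trivially on the second factor, and translations in \(u\) act along the \(\A^1\)-factor. Together these show that every \(\Aut(\Xcal\times\A^1)\)-orbit is a union of some of the four sets \(\Ocal_i\times\A^1\), \(i=1,\dots,4\).

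First, merge \(\Ocal_1\times\A^1\) with \(\Ocal_2\times\A^1\). Take the locally nilpotent derivation \(D\) of \cite[Section~7]{DMJP10} with \(D(x)\neq0\), and a point \(q\) with \(x(q)=0\) and \((z,t)(q)\neq(0,0)\) where \(D(x)(q)\neq0\). Along the \(\Ga\)-orbit through \(q\), the function \(x\) is a nonconstant polynomial in the group parameter, so the orbit leaves \(\{x=0\}\) and meets \(\Ocal_1\times\A^1\). The point \(q\) exists because \(\Ocal_2\times\A^1\) is dense in \(\{x=0\}\), and \(D(x)\) cannot vanish identically on \(\{x=0\}\). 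Otherwise \(x\) would divide \(D(x)\); since \(x\) is irreducible, the ideal \((x)\) would be \(D\)-stable, and a locally nilpotent derivation preserving a principal prime ideal \((x)\) must kill \(x\). This contradicts \(D(x)\neq0\).

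Second, merge \(\Ocal_3\times\A^1\) with \(\Ocal_4\times\A^1\). Use the automorphism of \(\Xcal\times\A^1\) obtained by composing \(\varphi\times\mathrm{id}_{\A^1}\) with the restriction of the automorphism of \(\A^5\) from \cite[Proposition~6.4]{DMJP10}. It sends the origin to \((0,1,0,0,0)\in\Ocal_3\times\A^1\). At this point at most two orbits remain: the union of the first two sets and the union of the last two, that is, \(L\times\A^1\).

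Third, connect these two orbits using Proposition~\ref{prop:new-LND}. The derivation \(E\) induces a \(\Ga\)-action on \(\Ycal\). At \(p=(0,1,0,0,1)=\Theta(0,0,0,0,1)\) we have \(E(t)(p)=a(p)c(p)=-2\neq0\), so \(t\) is nonconstant along the \(\Ga\)-orbit of \(p\). Hence that orbit leaves the plane \(\{x=z=t=0\}=\Theta(L\times\A^1)\). Pulling back by \(\Theta\), some automorphism moves \((0,0,0,0,1)\in L\times\A^1\) to a point outside \(L\times\A^1\), so the two remaining orbits coincide.

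The only step needing real care is the first one, namely justifying that the \(D\)-orbit actually reaches \(\Ocal_2\times\A^1\) rather than the line \(L\times\A^1\); the ideal-stability argument above handles this. Everything else rests on the cited results and on Proposition~\ref{prop:new-LND}.
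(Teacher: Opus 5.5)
Your proposal is correct and follows the paper's own argument. You reduce to the four sets $\Ocal_i\times\A^1$, merge the first two using the derivation $D$ with $D(x)\neq0$, merge the last two using the automorphism built from $\varphi$, and then connect the two remaining orbits using $E$ at $p=(0,1,0,0,1)$. Your extra justification of the first merge is sound, though the property it really uses is that $(x)$ is prime (the quotient is $\C[y,z,t,u]/(z^2+t^3)$), so that vanishing on $\{x=0\}$ forces $x\mid D(x)$; with the explicit $D$ of Proposition~\ref{prop:D} it is immediate anyway, since $D(x)=-2t^3z$ does not vanish on $\Theta(\Ocal_2\times\A^1)$.
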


In fact, we can strengthen this conclusion by showing that transitivity already
holds for the special automorphism group. Recall that, for an affine
variety \(V\), the group \(\SAut(V)\) is the subgroup of
\(\Aut(V)\) generated by all one-parameter unipotent subgroups, or
equivalently by the automorphisms arising from algebraic
\(\Ga\)-actions on \(V\). Moreover, by \cite[Theorem~0.1]{AFKKZ13}, the transitivity
of \(\SAut(\Xcal\times\A^1)\) implies in turn that $\Xcal\times\A^1$ is flexible.
This means that, at every point $q\in\Xcal\times\A^1$, the tangent
space $T_q(\Xcal\times\A^1)$ is spanned by tangent vectors to the
orbits of algebraic $\Ga$-actions. The same theorem also implies
that $\SAut(\Xcal\times\A^1)$ acts infinitely transitively, that is,
$m$-transitively for every $m\geq1$.

To prove this stronger transitivity statement, we only need, in addition to the
derivation \(E\) introduced above, the following three locally nilpotent
derivations of \(\C[x,y,z,t,u]\):
\[
\tau
=
\frac{\partial}{\partial u}
-x\frac{\partial}{\partial z}
+2u\frac{\partial}{\partial y},
\]
and
\[
\delta_z
=
x^2\frac{\partial}{\partial z}
-2(z+xu)\frac{\partial}{\partial y},
\qquad
\delta_t
=
x^2\frac{\partial}{\partial t}
-3t^2\frac{\partial}{\partial y}.
\]
They are the conjugates by \(\Theta\) of the translation
\(\partial/\partial u\) along the cylinder and of the following two standard 
locally nilpotent derivations of \(\C[\Xcal\times\A^1]\):
\[
x^2\frac{\partial}{\partial z}
-2z\frac{\partial}{\partial y}
\qquad\text{and}\qquad
x^2\frac{\partial}{\partial t}
-3t^2\frac{\partial}{\partial y}.
\]
 In particular, \(\tau\), \(\delta_z\), and \(\delta_t\) are locally
nilpotent and annihilate \(\widetilde P\).

Our main result can now be stated as follows.

\begin{theorem}
\label{thm:main}
Let
\[
G=
\left\langle
\exp(\lambda\partial)
\mathrel{}\middle|\mathrel{}
\lambda\in\C,\ 
\partial\in\{\tau,\delta_z,\delta_t,E\}
\right\rangle
\subset\Aut(\A^5).
\]
Then \(G\) preserves \(\Ycal\), and its induced action on \(\Ycal\)
is transitive. Consequently, \(\Xcal\times\A^1\) is flexible, and
\(\SAut(\Xcal\times\A^1)\) acts infinitely transitively.
\end{theorem}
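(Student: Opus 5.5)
The plan is to show that every point of \(\Ycal\) can be moved by \(G\) to the base point \(p_0=(0,0,0,0,0)\), working orbit by orbit along the stratification that Theorem~\ref{thm:Russell-orbits} induces on \(\Ycal\) via \(\Theta\). First, \(G\) preserves \(\Ycal\): \(\tau,\delta_z,\delta_t,E\) are locally nilpotent and kill \(\widetilde P\) (Proposition~\ref{prop:new-LND} and the remarks above), so each \(\exp(\lambda\partial)\) is an automorphism of \(\A^5\) fixing \(\widetilde P\). The last sentence of the theorem will then follow from \cite[Theorem~0.1]{AFKKZ13}, since \(G\) restricts to a subgroup of \(\SAut(\Ycal)\) and \(\Theta\) identifies \(\Ycal\) with \(\Xcal\times\A^1\).

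For transitivity I would pull everything back by \(\Theta\), where \(\tau,\delta_z,\delta_t\) become \(\partial/\partial u\), \(x^2\partial_z-2z\partial_y\) and \(x^2\partial_t-3t^2\partial_y\). The step on the open set \(\{x\neq0\}\) goes as follows. The flow of \(\partial_u\) moves \(u\) freely. On a fiber \(x=x_0\neq0\) the other two flows move \((z,t)\) arbitrarily, with \(y\) then determined by the equation, so the group generated by these three acts transitively on each \(\{x=x_0\}\times\A^1\). Then, to change the value of \(x\), I would use \(E\), for which \(E(x)=3t^2m\) is not identically zero. Given a point with \(x\neq0\), I first move it so that \(t\neq0\) and \(m\neq0\), and then flow along \(E\). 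The value \(x\) is a nonconstant polynomial in \(\lambda\) along that orbit, so it takes every value in \(\C\). This connects all fibers \(x=x_0\), and it also reaches \(x=0\), that is, the stratum \(\Ocal_2\times\A^1\).

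On \(\{x=0\}\), the flows \(\delta_z\) and \(\delta_t\) are trivial, while \(\tau\) moves \(u\) and changes \(y\) by \(2u\lambda+\lambda^2\). Using \(\tau\) together with \(E\), I would show directly that every point of \(\Ycal\cap\{x=0\}\) can be pushed into \(\{x\neq0\}\). Here \(E(x)=3t^2m\). For points with \(t\neq0\) I arrange \(m=zy+2au\neq0\) (note \(a=1+2zu\) when \(x=0\)) by first translating \(u\) with \(\tau\). For points with \(t=0\) I first use \(E(t)=ac\) to make \(t\neq0\), which requires \(ac\neq0\). At the distinguished plane \(\{x=z=t=0\}\) we have \(a=1\) and \(c=-2u\), so a preliminary \(\tau\)-translation making \(u\neq0\) suffices, exactly as in the computation at \(p=(0,1,0,0,1)\) above. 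On the points with \(x=0\), \(z\neq0\), \(t=0\), the function \(ac\) is a polynomial in \(u\) after the \(\tau\)-translation, and is generically nonzero.

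Combining these steps, every point is \(G\)-equivalent to a point with \(x\neq0\), and all such points form a single \(G\)-orbit. Hence \(G\) is transitive. The main obstacle is the bookkeeping on \(\{x=0\}\). There I must verify that, for every point, some explicit polynomial in the flow parameters (such as \(ac\) or \(t^2m\) along \(\tau\)-translates) is not identically zero, so that the generic-choice arguments apply. I would treat the cases \(z\neq0\), \(t\neq0\), and \(z=t=0\) separately by direct substitution.
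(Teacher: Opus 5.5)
Your argument is correct. Its architecture matches the paper's: \(\tau,\delta_z,\delta_t\) act transitively on each fiber \(\{x=\xi\}\), \(\xi\neq0\); \(E\) changes \(x\); and on \(H\) a \(\tau\)-translation makes \(u\neq0\), so that \(E(t)=ac=-2u\neq0\).

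Where you differ is the mechanism for moving between fibers.
\begin{itemize}
\item The paper works on the \(E\)-stable hypersurface \(\{a=0\}\). There Lemma~\ref{lem:E-orbits-a-zero} makes the orbits explicit and \(x\) moves linearly in \(\lambda\). It steers points to the explicit points \(q_\xi\) and \(r\), which have \(a=0\), and so produces explicit group elements.
\item You use only that \(x\) restricted to an \(E\)-orbit is a polynomial in \(\lambda\) with linear coefficient \(E(x)(p)=3t^2m(p)\). Once \(t\,m(p)\neq0\), this polynomial is nonconstant and hence surjective onto \(\C\).
\end{itemize}
Your route avoids the lemma and the special points. It is softer, needing only that \(E(x)\) is nonzero at suitable points. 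The price is the genericity checks you left as bookkeeping. They are easy:
\begin{itemize}
\item On \(\{x=\xi\}\) with \(\xi\neq0\), setting \(z=0\) gives \(a=-t^3/\xi\) and \(m=-2t^3u/\xi\). So \(tm\not\equiv0\) on the fiber.
\item Along the \(\tau\)-orbit of a point with \(x=0\), \(m\) is a polynomial in \(\lambda\) with \(\lambda^2\)-coefficient \(5z\). This is nonzero off \(H\).
\end{itemize}

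Two statements in your sketch are inaccurate, though neither affects the argument.
\begin{itemize}
\item \(\delta_z\) and \(\delta_t\) are not trivial on \(\{x=0\}\). There they act by \(y\mapsto y-2\lambda z\) and \(y\mapsto y-3\lambda t^2\). The paper in fact uses \(\delta_z\) on that locus to prescribe \(y\).
\item The case \(x=0\), \(z\neq0\), \(t=0\) is empty, because \(z^2+t^3=0\) on \(\Ycal\cap\{x=0\}\). The only cases are \(H\) and \(zt\neq0\).
\end{itemize}
Your opening announces moving everything to the origin, but the argument actually concludes via the single orbit \(\{x\neq0\}\). That conclusion is fine.
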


The abundance of algebraic \(\Ga\)-actions on
\(\Xcal\times\A^1\) naturally raises the question whether some of
them arise from an action of a larger algebraic group. In this
direction, Freudenburg raised the following question, which remains
open \cite[Question~9.4]{Fre24}: does \(\Xcal\times\A^1\) admit a
nontrivial algebraic action of \(\mathrm{SL}_2(\C)\)?

The article is organized as follows.
In Section~\ref{sec:notation}, we introduce the notation associated
with the adapted embedding of the cylinder over the Russell cubic
threefold as the hypersurface \(\Ycal\subset\A^5\).
In Section~\ref{sec:DMJP}, we rewrite in these adapted coordinates
the locally nilpotent derivation constructed in \cite{DMJP10} that
does not annihilate \(x\).
In Section~\ref{sec:E}, we apply Bhatwadekar's technique to construct
the new derivation \(E\) and prove
Proposition~\ref{prop:new-LND}. Finally,
Section~\ref{sec:transitivity} is devoted to the proof of
Theorem~\ref{thm:main}.

\section{Notation}
\label{sec:notation}

 We work over the field \(\C\), and all
derivations are understood to be \(\C\)-derivations. 
If \(A\) is a domain and \(f\in A\setminus\{0\}\), we denote by
\(A_f=A[f^{-1}]\) the localization of \(A\) at \(f\).

We collect here the notation used in the introduction and throughout
the remainder of the article. We let 
\[\begin{aligned}
R &=\C[x,y,z,t,u],\\ 
\widetilde P &=x^2y+2xzu+z^2+x+t^3 \in R, \\ 
\Ycal &=V(\widetilde P)\subset\A^5, \\
B &=R/(\widetilde P)=\C[\Ycal].
\end{aligned}\]
By a slight abuse of
notation, we use the same letters \(x,y,z,t,u\) for the coordinate
functions on \(\A^5\) and for their residue classes in \(B\). 

Recall that the automorphism of \(\A^5\) defined by 
\[\Theta\colon(x,y,z,t,u)\mapsto(x,y+u^2,z-xu,t,u)\]restricts to an isomorphism
\[
\Theta\colon\Xcal\times\A^1\xrightarrow{\sim}\Ycal.
\]The plane
\[
H=\{x=z=t=0\}\subset\Ycal
\]
is the image under \(\Theta\) of
\(L\times\A^1\subset\Xcal\times\A^1\).

We shall repeatedly use the elements
\[
a=1+xy+2zu,\qquad c=zy-2au,\qquad m=zy+2au
\]
of \(R\). Direct computations give 
\[
\widetilde P=ax+z^2+t^3
\tag{\(\ast\)}
\label{eq:P-a}
\]
and
\[
zc+yt^3+a(a-1)=y\widetilde P.
\tag{\(\ast\ast\)}
\label{eq:fundamental-relation}
\]

\section{A first locally nilpotent derivation}
\label{sec:DMJP}

In \cite[Section~7]{DMJP10}, an explicit locally nilpotent derivation
of \(B\) which does not annihilate \(x\) was constructed. The
construction is based on the interpretation of the Russell cubic as
a one-parameter family of Danielewski surfaces parametrized by \(t\). After inverting \(t\),
an elementary locally nilpotent derivation is transported through an
isomorphism of cylinders, and a suitable power of \(t\) is then used
to clear the denominators. We present a slight modification of this
construction, in a form adapted to our notation.

Let
\[B'_t
=
\C[x,y,z,t^{\pm1},u]/(xy+z^2+t^3).
\]

\begin{lemma}
\label{lem:Phi}
The homomorphism \(\Phi\colon B'_t\to B_t\) defined by
\[
\Phi(x)=x,
\qquad
\Phi(y)=a,
\qquad
\Phi(z)=z,
\qquad
\Phi(t)=t,
\qquad
\Phi(u)=c
\]
is an isomorphism. Its inverse \(\Psi\colon B_t\to  B'_t\) is
given by
\[
\Psi(x)=x,
\qquad
\Psi(z)=z,
\qquad
\Psi(t)=t,
\]
and
\[
\Psi(y)=-\frac{y(y-1)+zu}{t^3},
\qquad
\Psi(u)=\frac{xu-z(y-1)}{2t^3}.
\]
\end{lemma}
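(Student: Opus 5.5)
The plan is to verify everything directly on generators, using the two identities \eqref{eq:P-a} and \eqref{eq:fundamental-relation}. There are four things to check: \(\Phi\) is well defined, \(\Psi\) is well defined, \(\Psi\circ\Phi=\mathrm{id}\), and \(\Phi\circ\Psi=\mathrm{id}\). Since \(t\) is sent to \(t\) in both directions, each map extends to the localizations at \(t\) once it respects the defining relation. It is then enough to check the two compositions on the generators \(x,y,z,t,u\).

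\emph{Well-definedness of \(\Phi\).} By \eqref{eq:P-a} we get \(\Phi(xy+z^2+t^3)=ax+z^2+t^3=\widetilde P\), which is \(0\) in \(B_t\).

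\emph{\(\Psi\circ\Phi=\mathrm{id}\).} This amounts to computing \(\Psi(a)\) and \(\Psi(c)\) in \(B'_t\), using the relation \(xy+z^2=-t^3\) there.
\begin{itemize}
\item For \(a\), we have \(\Psi(a)=1+x\Psi(y)+2z\Psi(u)\). The numerator collapses: \(-xy(y-1)-xzu+xzu-z^2(y-1)=-(y-1)(xy+z^2)=(y-1)t^3\). Hence \(\Psi(a)=y\).
\item For \(c\), we have \(\Psi(c)=z\Psi(y)-2\Psi(a)\Psi(u)=z\Psi(y)-2y\Psi(u)\). Here the numerator is \(-zy^2+zy-z^2u-xyu+zy^2-zy=-u(xy+z^2)=ut^3\). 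Hence \(\Psi(c)=u\).
\end{itemize}
This computation also settles the well-definedness of \(\Psi\). Indeed, \(\Psi(\widetilde P)=\Psi(a)x+z^2+t^3=xy+z^2+t^3=0\) in \(B'_t\), again by \eqref{eq:P-a}.

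\emph{\(\Phi\circ\Psi=\mathrm{id}\).} This is the step where the fundamental relation is needed, and I expect it to be the only real obstacle.
\begin{itemize}
\item For \(y\), we have \(\Phi(\Psi(y))=-\bigl(a(a-1)+zc\bigr)/t^3\). In \(B\), relation \eqref{eq:fundamental-relation} gives \(a(a-1)+zc=-yt^3\), so this equals \(y\).
\item For \(u\), we have \(\Phi(\Psi(u))=\bigl(xc-z(a-1)\bigr)/(2t^3)\). Expanding gives \(xc-z(a-1)=xzy-2axu-xzy-2z^2u=-2u(ax+z^2)\). By \eqref{eq:P-a}, \(ax+z^2=-t^3\) in \(B\), so the expression equals \(u\).
\end{itemize}
The remaining generators \(x,z,t\) are fixed by both maps. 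Hence \(\Phi\) and \(\Psi\) are mutually inverse isomorphisms.
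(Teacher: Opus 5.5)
Your proposal is correct, and it is exactly the routine verification that the paper leaves to the reader; the paper gives no proof of this lemma. The relation \(xy+z^2=-t^3\) in \(B'_t\) gives \(\Psi(a)=y\) and \(\Psi(c)=u\), which establishes both the well-definedness of \(\Psi\) and \(\Psi\circ\Phi=\mathrm{id}\), while \eqref{eq:fundamental-relation} and \eqref{eq:P-a} give \(\Phi\circ\Psi=\mathrm{id}\); all of your expansions check out.
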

We have included the explicit description of \(\Phi\) and \(\Psi\) to
make the construction of \(D\) transparent and to motivate the
formulas below. We omit the routine verification that these maps are
inverse to each other, since all the properties of \(D\) needed in the
sequel will be verified directly from those formulas. Notice in particular that \(a\) and \(c\) are the images under
\(\Phi\) of the coordinate functions \(y\) and \(u\), respectively. This explains their natural occurrence in the description of \(D\).

On \(B'_t\), consider the triangular locally nilpotent derivation
\[
\partial
=
y\frac{\partial}{\partial z}
-2z\frac{\partial}{\partial x}.
\]It annihilates \(xy+z^2+t^3\). Since
\(t\in\Ker(\partial)\), the derivation \(
D_t=\Phi\circ(t^3\partial)\circ\Psi
\) is locally nilpotent on \(B_t\). Although \(\Psi\) involves negative
powers of \(t\), a direct computation on the generators shows that the
factor \(t^3\) clears all the denominators. Thus \(D_t\) preserves
\(B\), and its restriction to \(B\) is given by the formulas in the
following proposition. 

\begin{proposition}\label{prop:D}
The following formulas define a locally nilpotent derivation \(D\) of
\(B\):
\[
\begin{aligned}
D(x)&=-2t^3z, & D(z)&=at^3, & D(y)&=-ac,\\
D(t)&=0,      & D(u)&=yt^3+\frac12a(a-1)
\end{aligned}
\]
Moreover, \(D(a)=D(c)=0\) and \(D(x)\ne0\).
\end{proposition}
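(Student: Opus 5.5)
Proposition~\ref{prop:D} can be checked directly, without going through $\Phi$ and $\Psi$. I would define $D$ on $R$ by the displayed formulas, show it preserves the ideal $(\widetilde P)$ and is locally nilpotent on $B$, and then compute $D(a)$ and $D(c)$.

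\textbf{Step 1: $D(a)=D(c)=0$ in $R$.} Apply the Leibniz rule to $a=1+xy+2zu$:
\[
D(a)=yD(x)+xD(y)+2uD(z)+2zD(u)=-2yzt^3-xac+2uat^3+2zyt^3+za(a-1).
\]
The $yzt^3$ terms cancel, which leaves $a\bigl(-xc+2ut^3+z(a-1)\bigr)$. Now substitute $c=zy-2au$ and $a-1=xy+2zu$:
\[
-xzy+2axu+2ut^3+xyz+2z^2u=2u(ax+z^2+t^3)=2u\widetilde P
\]
by~\eqref{eq:P-a}. Hence $D(a)=2au\widetilde P$, which vanishes in $B$, so $D(a)=0$ there.

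For $c=zy-2au$ we get $D(c)=yD(z)+zD(y)-2uD(a)-2aD(u)$. The terms $yat^3$ and $-2ayt^3$ combine to $-ayt^3$, so
\[
D(c)=-a\bigl(yt^3+zc+a(a-1)\bigr)-2uD(a).
\]
By~\eqref{eq:fundamental-relation} the bracket equals $y\widetilde P$, so $D(c)\in(\widetilde P)$ as well.

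\textbf{Step 2: $D$ preserves $(\widetilde P)$.} Using~\eqref{eq:P-a},
\[
D(\widetilde P)=D(a)x+aD(x)+2zD(z)+3t^2D(t)=D(a)x-2azt^3+2azt^3=xD(a)\in(\widetilde P).
\]
So $D$ descends to a derivation of $B$.

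\textbf{Step 3: local nilpotency on $B$.} Work modulo $\widetilde P$, where $t$, $a$ and $c$ lie in $\ker D$. Then:
\begin{itemize}
\item $D(z)=at^3$ is a kernel element, so $D^2z=0$;
\item $D(x)=-2t^3z$, so $D^2x=-2at^6$ and $D^3x=0$;
\item $D(y)=-ac$, so $D^2y=0$;
\item $D(u)=yt^3+\tfrac12a(a-1)$, so $D^2u=t^3D(y)=-act^3$ and $D^3u=0$.
\end{itemize}
Every generator of $B$ is therefore killed by a power of $D$. Since the set of elements annihilated by some power of a derivation is a subalgebra, $D$ is locally nilpotent on $B$.

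\textbf{Step 4: $D(x)\neq0$ in $B$.} We have $D(x)=-2t^3z$. This is nonzero in $B$ because $\widetilde P$ is irreducible (it has degree $1$ in $y$ with coprime coefficients $x^2$ and $2xzu+z^2+x+t^3$) and does not divide $t^3z$ by degree reasons; alternatively, $t^3z$ is nonzero at a point of $\Ycal$ such as $x=1$, $y=-1-z^2-t^3$ with $z,t\neq0$.

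I expect the only real obstacle to be Step~1. The cancellations hold only modulo $\widetilde P$, and one has to rewrite them through the identities~\eqref{eq:P-a} and~\eqref{eq:fundamental-relation}. After that, Steps~2--4 are routine, because $t,a,c\in\ker D$ make $D$ triangular.
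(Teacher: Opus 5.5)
Your proof is correct and follows the paper's approach: you verify everything directly from the displayed formulas, using \eqref{eq:P-a}, \eqref{eq:fundamental-relation} and the identity $-xc+2ut^3+z(a-1)=2u\widetilde P$, and then run the same nilpotency check on generators. The only difference is the order of the computation. You obtain $\widehat D(a)=2au\widetilde P$ in $R$ first and deduce $\widehat D(\widetilde P)=x\widehat D(a)$, whereas the paper computes $\widehat D(\widetilde P)=2aux\widetilde P$ and then uses that $B$ is a domain to get $D(a)=0$; your ordering avoids the domain argument. (In your alternative point-evaluation for Step~4, you should also set $u=0$.)
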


\begin{proof}
Again, we omit the routine conjugation calculation and verify the
assertion directly from the displayed formulas. Let \(\widehat D\) be
the derivation of \(R\) defined by these formulas.

We compute directly
\[
\begin{aligned}
\widehat D(\widetilde P)
={}&\widehat D(x^2y+2xzu+z^2+x+t^3)\\
={}&2xy\widehat D(x)+x^2\widehat D(y)
 +2zu\widehat D(x)+2xu\widehat D(z)\\
&+2xz\widehat D(u)+2z\widehat D(z)
 +\widehat D(x)+3t^2\widehat D(t)\\
={}&-4xyzt^3-acx^2-4z^2ut^3+2axut^3\\
&+2xzyt^3+xza(a-1)+2azt^3-2zt^3\\
={}&ax\bigl(-xc+2ut^3+z(a-1)\bigr).
\end{aligned}
\]
Since \(c=zy-2au\), \(a-1=xy+2zu\), and by \eqref{eq:P-a}, we have
\[
-xc+2ut^3+z(a-1)
=2u(ax+z^2+t^3)=2u\widetilde P.
\]
Consequently, \(\widehat D(\widetilde P)=2aux\widetilde P\). Thus \(\widehat D\) induces a derivation \(D\) of \(B\).

Applying \(D\) to \eqref{eq:P-a} in \(B\), we obtain
\[
\begin{aligned}
0=D(\widetilde P)
 &=xD(a)+aD(x)+2zD(z)+3t^2D(t)\\
 &=xD(a).
\end{aligned}
\]
Since \(B\) is a domain and \(x\ne0\), it follows that \(D(a)=0\).

Using \(c=zy-2au\), we find
\[
\begin{aligned}
D(c)
&=D(z)y+zD(y)-2D(a)u-2aD(u)\\
&=ayt^3-acz-2a\left(yt^3+\frac12a(a-1)\right)\\
&=-a\bigl(zc+yt^3+a(a-1)\bigr)=0,
\end{aligned}
\]
where the last equality follows from
\eqref{eq:fundamental-relation}.

It remains to prove local nilpotence. Since \(a,c,t\in\Ker(D)\), we
have
\[
D^2(y)=D^2(z)=0.
\]
Furthermore,
\[
D^2(u)=-act^3,\qquad D^3(u)=0,
\]
and
\[
D^2(x)=-2at^6,\qquad D^3(x)=0.
\]
Thus \(D\) is nilpotent on a generating set of \(B\), and hence is
locally nilpotent. Finally, \(D(x)=-2t^3z\ne0\).
\end{proof}

\section{Construction of the new derivation}
\label{sec:E}

The purpose of this section is to prove
Proposition~\ref{prop:new-LND}. Before giving a direct proof, we
explain how the formulas defining \(E\) arise from the derivation
\(D\). As in the previous section, we omit the routine computations
leading to these formulas, since all the properties of \(E\) needed
below will be verified directly.

We begin by recalling the general procedure on which our construction
is based. We learned it from \cite[Section~7]{GP26}, where it is
referred to as Bhatwadekar's technique.

\begin{construction}[Bhatwadekar's technique]
\label{constr:Bhatwadekar}

Let \(\Delta\) be a locally nilpotent derivation of a domain \(A\),
and let \(w\in A\) be a local slice with \(\Delta(w)=h\in\Ker(\Delta)\setminus\{0\}\). After localizing at \(h\), the element \(w/h\) is a slice, and the
slice theorem gives \(A_h=(\Ker\Delta)_h[w/h]=(\Ker\Delta)_h[w]\).

Now, any locally nilpotent derivation \(\varepsilon\) of
\((\Ker\Delta)_h\) whose kernel contains \(h\) extends to a locally
nilpotent derivation of \(A_h\) by requiring that \(\varepsilon(w)=0\). Moreover, if there
exists an integer \(N\geq0\) such that \(h^N\varepsilon(A)\subset A\), then the restriction to \(A\) of the derivation \(h^N\varepsilon\) is locally
nilpotent, because \(h\in\Ker(\varepsilon)\). 
\end{construction}

We now apply Construction~\ref{constr:Bhatwadekar} to the derivation
\(D\). Recall that our goal is to construct an action moving a point
of \(H\) outside \(H\). In our situation, \(a\in\Ker(D)\) and, moreover,  \(a\) restricts to the constant function \(1\) on \(H\). This motivates the choice \(h=a\).

The element \(a\) does not, however, belong to the image of \(D\). To overcome this obstruction, we adjoin a
new variable \(v\) and extend \(D\) to \(B[v]\) by setting
\(D(v)=0\). Then, we consider the derivation
\[
\widetilde D=D+a\frac{\partial}{\partial v}.
\]
This derivation \(\widetilde D\) is locally nilpotent with \(\widetilde D(v)=a\), as needed.

According to Construction~\ref{constr:Bhatwadekar}, the next step is
to construct a locally nilpotent derivation of the localized kernel
\((\Ker\widetilde D)_a\) whose kernel contains \(a\).

To describe this localized kernel, set
\[
q=z-t^3v,
\qquad
r=y+cv.
\]
Since \(\widetilde D(z)=at^3\), \(\widetilde D(y)=-ac\), and
\(a,c,t\in\Ker(\widetilde D)\), we have
\(q,r\in\Ker(\widetilde D)\). A direct calculation shows that
\((\Ker\widetilde D)_a\) is generated as a \(\C\)-algebra by
\(a^{\pm1},c,t,q,r\), subject only to the relation \(qc+rt^3+a(a-1)=0\), which follows from \eqref{eq:fundamental-relation}. Thus
\[
(\Ker\widetilde D)_a=
\frac{\C[a^{\pm1},c,t,q,r]}
     {(qc+rt^3+a(a-1))}.
\]
We now define a derivation \(\varepsilon\) of
\(B[v]_a=(\Ker\widetilde D)_a[v]\) by
\[
\varepsilon(t)=c,
\qquad
\varepsilon(q)=-3rt^2,
\qquad
\varepsilon(a)=\varepsilon(c)=\varepsilon(r)=\varepsilon(v)=0.
\]
It is well defined, since \(\varepsilon\bigl(qc+rt^3+a(a-1)\bigr)
=-3rct^2+3rct^2=0\), and it is locally nilpotent because it is triangular.

Since \(a\in\Ker(\varepsilon)\), the derivation \(a\varepsilon\) is
locally nilpotent on \(B[v]_a\). Rewriting its values in terms of the generators of \(B[v]\) gives
\[
\begin{aligned}
(a\varepsilon)(x)&=3t^2m,
& (a\varepsilon)(y)&=0,
& (a\varepsilon)(z)&=-3ayt^2,\\
(a\varepsilon)(t)&=ac,
& (a\varepsilon)(u)&=-\frac32y^2t^2,
& (a\varepsilon)(v)&=0.
\end{aligned}
\]

All the expressions displayed above belong to \(B\), and hence to
\(B[v]\). Since \(x,y,z,t,u,v\) generate \(B[v]\), it follows that \(a\varepsilon(B[v])\subset B[v]\). Thus the denominator-clearing condition in
Construction~\ref{constr:Bhatwadekar} holds with \(N=1\), and
\(a\varepsilon\) restricts to a locally nilpotent derivation of
\(B[v]\). Furthermore, its values on \(x,y,z,t,u\) belong to \(B\),
so it preserves \(B\). Its restriction to \(B\) is precisely the
derivation \(E\) introduced in Proposition~\ref{prop:new-LND}.

\begin{proof}[Proof of Proposition~\ref{prop:new-LND}]
A direct computation gives
\[
E(a)=E(1+xy+2zu)
=3yt^2(m-zy-2au)=0.
\]
Consequently,
\[
E(c)=E(zy-2au)
=-3ay^2t^2+3ay^2t^2=0,
\]
and
\[
E(m)=E(zy+2au)=-3ay^2t^2-3ay^2t^2=-6ay^2t^2.
\]

Applying \(E\) to \eqref{eq:P-a}, we obtain
\[
E(\widetilde P)
=3at^2m-6ayzt^2+3act^2
=3at^2(m-2zy+c)=0.
\]

It remains to check local nilpotence on \(R\). Since
\(a,c,y\in\Ker(E)\) and \(E(t)=ac\), we have \(E^2(t)=0\) and
\(E^3(t^2)=0\). Moreover, \(E(z)\), \(E(u)\), and \(E(m)\) all
belong to \(\Ker(E)\,t^2\). It follows that
\[
E^4(z)=E^4(u)=E^4(m)=0.
\]
The Leibniz rule now gives \(E^6(t^2m)=0\). Hence
\(E^7(x)=E^6(3t^2m)=0\). Thus \(E\) is
nilpotent on the generators of \(R\), and is therefore locally
nilpotent.
\end{proof}

The element \(a\) used for the localization also plays a useful role
in the geometry of the resulting action. Since \(E(a)=0\), the
subvariety \(\Ycal\cap\{a=0\}\) is invariant under the \(E\)-action,
whose restriction to it takes a particularly simple form. We
conclude this section by recording this description, which will be
used in the proof of Theorem~\ref{thm:main}.

\begin{lemma}
\label{lem:E-orbits-a-zero}
Let \(p=(x,y,z,t,u)\in\Ycal\) satisfy \(a(p)=0\). Then, for every
\(\lambda\in\C\),
\[
\exp(\lambda E)(p)
=
\left(
x+3\lambda zyt^2,\,
y,\,
z,\,
t,\,
u-\frac32\lambda y^2t^2
\right).
\]
\end{lemma}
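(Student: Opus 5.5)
The idea is to restrict $E$ to the invariant locus $\{a=0\}$, where it becomes a derivation of a very simple shape, and to show that the displayed curve is exactly the flow of that restricted derivation through $p$.

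The orbit map $\lambda\mapsto\exp(\lambda E)(p)$ is the unique polynomial curve $\gamma$ with $\gamma(0)=p$ and $\frac{d}{d\lambda}f(\gamma(\lambda))=(Ef)(\gamma(\lambda))$ for every coordinate function $f$. Concretely, $f(\exp(\lambda E)(p))=\sum_{k}\frac{\lambda^k}{k!}(E^kf)(p)$, and this sum is finite by Proposition~\ref{prop:new-LND}. So it suffices to compute $(E^kf)(p)$ for $f\in\{x,y,z,t,u\}$ and $k\ge1$.

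The key input is that $a$ and $c$ lie in $\Ker(E)$, by Proposition~\ref{prop:new-LND}. Since $E(a)=0$, the ideal $(a)$ is $E$-stable, so $E$ descends to $R/(a)$, and we may compute there. Modulo $a$ we have $c\equiv zy$ and $m\equiv zy$. Hence $E$ induces on $R/(a)$ the derivation
\[
x\mapsto 3t^2zy,\quad y\mapsto 0,\quad z\mapsto 0,\quad t\mapsto 0,\quad u\mapsto -\tfrac32 y^2t^2 .
\]
Thus $y,z,t$ are killed, and $E(x)$ and $E(u)$ are polynomials in $y,z,t$ modulo $a$. It follows that $E^2(x)\equiv E^2(u)\equiv 0 \pmod a$. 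Evaluating at $p$, where $a(p)=0$, we get that $y,z,t$ are constant along the orbit, while $x$ and $u$ vary linearly in $\lambda$ with slopes $3zyt^2$ and $-\frac32 y^2t^2$ evaluated at $p$. This is exactly the displayed formula.

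The only point requiring care is the legitimacy of working modulo $a$. Since $E(a)=0$, we have $E^k(af)=aE^k(f)$, so $E^k$ maps $(a)$ into itself and every congruence modulo $a$ can be evaluated at $p$. The membership $p\in\Ycal$ is not even needed; it merely guarantees that the orbit stays in $\Ycal$.
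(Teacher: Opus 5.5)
Your proof is correct and takes essentially the same route as the paper: both use that $(a)$ is $E$-stable, observe $E(z),E(t)\in(a)$ and $E^2(x),E^2(u)\in(a)$, and evaluate the exponential series at $p$ using $m(p)=zy$. The only difference is cosmetic: you get $E^2(x),E^2(u)\in(a)$ by noting that the induced derivation on $R/(a)$ kills $y,z,t$, whereas the paper computes $E^2(x)=6actm-18ay^2t^4$ and $E^2(u)=-3acy^2t$ explicitly.
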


\begin{proof}
Since \(E(a)=0\), the ideal \((a)\subset R\) is \(E\)-stable.
Moreover,
\[
E(z)=-3ayt^2\in(a)
\qquad\text{and}\qquad
E(t)=ac\in(a).
\]
Using \(E(m)=-6ay^2t^2\), we obtain
\[
E^2(x)
=
E(3t^2m)
=
6actm-18ay^2t^4
\in(a),
\]
while
\[
E^2(u)
=
E\left(-\frac32y^2t^2\right)
=
-3acy^2t
\in(a).
\]
For every point \(p=(x,y,z,t,u)\in\Ycal\) with \(a(p)=0\) we therefore obtain
\[
E^k(x)(p)=E^k(u)(p)=0
\quad\text{for every }k\ge2,
\]
and
\[
E^k(z)(p)=E^k(t)(p)=0
\quad\text{for every }k\ge1.
\]

Finally, \(a(p)=0\) implies \(m(p)=zy\), and hence
\[
E(x)(p)=3zyt^2
\qquad\text{and}\qquad
E(u)(p)=-\frac32y^2t^2.
\]
The result now follows from the exponential formula.
\end{proof}

\section{Transitivity and flexibility}
\label{sec:transitivity}
Let \(G_0=
\left\langle
\exp(\lambda\tau),\,
\exp(\lambda\delta_z),\,
\exp(\lambda\delta_t)
\mathrel{}\middle|\mathrel{}\lambda\in\C
\right\rangle\) be the subgroup of \(G\) generated by the actions associated with
\(\tau\), \(\delta_z\), and \(\delta_t\). All these actions fix \(x\).
They are explicitly given by
\[
\exp(\lambda\tau)(x,y,z,t,u)
=
\bigl(
x,\,
y+2\lambda u+\lambda^2,\,
z-\lambda x,\,
t,\,
u+\lambda
\bigr),
\]
\[
\exp(\lambda\delta_z)(x,y,z,t,u)
=
\bigl(
x,\,
y-2\lambda(z+xu)-\lambda^2x^2,\,
z+\lambda x^2,\,
t,\,
u
\bigr),
\]
and
\[
\exp(\lambda\delta_t)(x,y,z,t,u)
=
\bigl(
x,\,
y-3\lambda t^2-3\lambda^2x^2t-\lambda^3x^4,\,
z,\,
t+\lambda x^2,\,
u
\bigr).
\]
We prove Theorem~\ref{thm:main} by showing that every point of
\(\Ycal\) can be sent by an element of \(G\) to
\[
q_1=(1,-1,1,-1,0).
\]
The argument proceeds in three steps. We first show that \(G_0\) acts
transitively on \(\Ycal\cap\{x=\xi\}\) for every \(\xi\ne0\). We
then apply Lemma~\ref{lem:E-orbits-a-zero} to show that every point
of \(\Ycal\setminus H\) can be sent to \(q_1\). Finally, we show
that every point of \(H\) can be moved into \(\Ycal\setminus H\).

\begin{proof}[Proof of Theorem~\ref{thm:main}]
\smallskip
\noindent\emph{Step 1: transitivity of \(G_0\) on the fibers
\(\Ycal\cap\{x=\xi\}\), for \(\xi\ne0\).}

Fix \(\xi\in\C^*\), and let
\(p=(\xi,y,z,t,u)\in\Ycal\). Using the \(\tau\)-action, we may first
arrange that \(u=0\). The \(\delta_z\)-action then allows us to make
\(z=0\), since it replaces \(z\) by \(z+\lambda\xi^2\), without
altering \(u\) or \(t\). Finally, the \(\delta_t\)-action allows us
to make \(t=0\), without altering \(u\) or \(z\). The equation
\(\widetilde P=0\) then forces \(y=-\xi^{-1}\). Thus \(p\) can be
sent to
\[
p_\xi=(\xi,-\xi^{-1},0,0,0).
\]
It follows that \(G_0\) acts transitively on
\(\Ycal\cap\{x=\xi\}\) for every \(\xi\in\C^*\).

\smallskip
\noindent\emph{Step 2: transitivity on \(\Ycal\setminus H\).}

Let \(p=(x,y,z,t,u)\in\Ycal\setminus H\). Suppose first that
\(x=\xi\ne0\). By Step~1, \(p\) can be sent by an element of \(G_0\)
to
\[
q_\xi=
\left(
\xi,-1,1,-1,\frac{\xi-1}{2}
\right).
\]
A direct calculation shows that \(q_\xi\in\Ycal\) and
\(a(q_\xi)=0\). By Lemma~\ref{lem:E-orbits-a-zero},
\[
\exp(\lambda E)(q_\xi)=q_{\xi-3\lambda}.
\]
Taking \(\lambda=(\xi-1)/3\), we send \(q_\xi\) to \(q_1\).

Suppose now that \(x=0\). Since \(p\notin H\), the relation
\(z^2+t^3=0\) implies that \(z\ne0\) and \(t\ne0\). Using the
\(\tau\)-action, followed by the \(\delta_z\)-action, we may send
\(p\) to
\[
r=
\left(
0,-1,z,t,-\frac{1}{2z}
\right).
\]
Indeed, the first action allows us to prescribe \(u\), while the
second allows us to prescribe \(y\) without altering \(z\), \(t\),
or \(u\). We have \(a(r)=0\), so
Lemma~\ref{lem:E-orbits-a-zero} gives
\[
\exp(\lambda E)(r)
=
\left(
-3\lambda zt^2,\,
-1,\,
z,\,
t,\,
-\frac{1}{2z}-\frac32\lambda t^2
\right).
\]
Taking
\[
\lambda=-\frac{1}{3zt^2},
\]
we obtain
\[
\exp(\lambda E)(r)=(1,-1,z,t,0).
\]
This point belongs to \(\Ycal\cap\{x=1\}\), so Step~1 allows us to
send it to \(q_1\). Thus every point of \(\Ycal\setminus H\) can be
sent to \(q_1\), and \(G\) acts transitively on
\(\Ycal\setminus H\).

\smallskip
\noindent\emph{Step 3: leaving \(H\).}

Let \(p=(0,y,0,0,u)\in H\). Using the \(\tau\)-action, we may send
\(p\) to a point of \(H\) whose \(u\)-coordinate is \(1\). At this
point,
\[
a=1
\qquad\text{and}\qquad
c=-2,
\]
so
\[
E(t)=ac=-2\ne0.
\]
The restriction of \(t\) to the corresponding \(E\)-orbit is
therefore nonconstant. Hence this orbit contains a point outside
\(H\), which can be sent to \(q_1\) by Step~2.

Thus every point of \(\Ycal\) can be sent to \(q_1\), and \(G\) acts
transitively on \(\Ycal\). The remaining assertions follow from
\cite[Theorem~0.1]{AFKKZ13}, as explained in the introduction.
\end{proof}

\subsection*{AI disclosure}

Part of this work was developed through a series of chat-based discussions with ChatGPT, using GPT-5.6 Sol model. During these discussions,
the model drew the author's attention to Bhatwadekar's technique and
assisted with the computations leading to the construction of the
locally nilpotent derivation \(E\). 

\end{document}